\newtheorem{theorem}{Theorem}
\newtheorem{lemma}{Lemma}[section]
\newtheorem{corollary}[lemma]{Corollary}
\newtheorem*{theorem*}{Theorem}
\theoremstyle{definition}
\newtheorem{definition}[lemma]{Definition}
\theoremstyle{remark}
\newtheorem{remark}[lemma]{Remark}
\newtheorem{example}[lemma]{Example}
\newcommand{\MR}[1]{}
\newcommand{\itemref}[1]{\ref{#1}}
\newcommand{\M}{\mathcal M}
\newcommand{\Expect}{\mathbb{E}}
\newcommand{\bigOh}{\mathcal O}
\newcommand{\bfzero}{\boldsymbol{0}}
\newcommand{\bfones}{\boldsymbol{1}}
\DeclareMathOperator{\sign}{sign}
\DeclareMathOperator{\grad}{grad}
\newcommand{\R}{\mathbb{R}}
\def\hyph{-\penalty0\hskip0pt\relax}
\author[Sara Kropf]{Sara Kropf\footnotetext{The author is supported by the Austrian Science Fund (FWF):
  P~24644-N26.}\footnotetext{Email-address:
    \texttt{sara.kropf@aau.at}}}
\title[Variance and Covariance of Simultaneous Outputs of a
  Markov Chain]{Variance and Covariance of Several Simultaneous Outputs of a
Markov Chain}
\affiliation{
  % one line per affiliation, no postal codes, grant numbers or
  % similar
  Institut f\"ur Mathematik, Alpen-Adria-Universit\"at
  Klagenfurt}
\keywords{Markov source, variance, covariance, independence, Hamming weight,
Matrix-Tree Theorem, transducer, central limit theorem}
\begin{document}
\publicationdetails{18}{2016}{3}{11}{1341}
\maketitle
\begin{abstract}
The partial sum of the states of a Markov chain or more generally a Markov source
  is asymptotically normally distributed under suitable
  conditions. One of these conditions is that the variance is
  unbounded. A simple combinatorial characterization of Markov
  sources which satisfy this condition is given in terms of cycles of
  the underlying graph of the Markov chain. Also Markov sources with
  higher dimensional alphabets are considered.

  Furthermore, the case of an unbounded covariance between two
  coordinates of the Markov source is combinatorically characterized. If the covariance
  is bounded, then the two coordinates are asymptotically
  independent. 

  The results are illustrated by several examples, like the number of
  specific blocks in $0$-$1$-sequences and the Hamming weight of the
  width-$w$ non-adjacent form.
\end{abstract}

\section{Introduction}
We investigate the random vector defined as the $n$-th partial sum of a Markov
source over a higher dimensional alphabet. Under suitable conditions,
this random variable is asymptotically jointly normally
distributed. Its mean and variance-covariance
matrix is linear in the number of summands (cf.\ \cite[Theorem~2.22]{Drmota:2009:random}). On the one hand, these conditions include
irreducibility and aperiodicity of the underlying graph of the Markov chain, which can be checked
easily for a given Markov chain. On the other hand, we also have to
check that the variance-covariance matrix is regular, which requires
technical computations. In this article, we give
a simple combinatorial characterization of Markov sources whose corresponding
variance-covariance matrix is singular.

The covariance between two coordinates of this
random vector is also of interest: If it is bounded, then these
two coordinates are asymptotically independent because of the joint
normal distribution. We give a
combinatorial characterization of this case.

These characterizations are given in terms of subgraphs of the
underlying graph of the Markov chain: For the variance-covariance matrix,
we only have to consider all cycles. A regular variance-covariance
matrix will be proven to be equivalent to the linear independence of
certain functions of cycles of the underlying graph of the Markov chain.
 For
the characterization of an unbounded covariance, we have
to consider functional digraphs. This result is proven using an extension
of the Matrix-Tree Theorem in
\cite{Chaiken:1982:matrixtree,Moon:1994:matrixtree}.

As Markov sources are closely related to automata and transducers,
our results can also be used for the asymptotic analysis of sequences
which can be computed by transducers. This includes the Hamming weight
of many syntactically defined digit expansions as performed in
\cite{Grabner-Heuberger-Prodinger:2004:distr-results-pairs,Heuberger-Kropf-Prodinger:2015:output,Heuberger-Kropf:2013:analy,Grabner-Thuswaldner:2000:sum-of-digits-negative,Heigl-Heuberger:2012:analy-digit}. Furthermore,
occurrences of digits or subwords can also be computed by
transducers. Their variance (and covariance) is analyzed in
\cite{Grabner-Heuberger-Prodinger-Thuswaldner:2005:analy-linear,Avanzi-Heuberger-Prodinger:2006:scalar-multip-koblit-curves,Heuberger-Prodinger:2006:analy-alter,Bender-Kochman:1993:distr-subwor,Nicodeme-Salvy-Flajolet:2002:motif,Flajolet-Szpankowski-Vallee:2006:hidden-word-statis,Goldwurm-Radicioni:2007:averag-value}.

In \cite{Heuberger-Kropf-Wagner:2014:combin-charac}, the variance of the output of a transducer as well as the
covariance between the input and the output were analyzed. In this article, we consider the more general setting
of Markov chains. The proofs are similar as those in \cite{Heuberger-Kropf-Wagner:2014:combin-charac}, but the results are valid
in a broader context and can be formulated more clearly. 
In contrast to \cite{Heuberger-Kropf-Wagner:2014:combin-charac}, we allow the input sequence of the
transducer to be generated by a Markov source. This allows us to model an input sequence for a transducer whose letters do not occur with equal
probabilities and/or have dependencies between the letters. The precise
relation between the setting of this article and that
of \cite{Heuberger-Kropf-Wagner:2014:combin-charac} is given in
Section~\ref{sec:results}.

As an example, we prove that the Hamming weight of the so-called width-$w$ non-adjacent
form is asymptotically jointly normally distributed for two different
values of $w\geq 2$. The width-$w$ non-adjacent form is a binary digit
expansion with digits in $\{0,\allowbreak\pm1,\allowbreak\pm3,\allowbreak\ldots,\allowbreak\pm(2^{w-1}-1)\}$ and
the syntactical rule that at most one of any $w$ adjacent digits is
non-zero. This digit expansion exists and is unique for every
integer (cf.\ \cite{muirstinson:minimality,avanzi:mywnaf}). Furthermore, it has minimal
Hamming weight among all digit expansions with this base and digit set.

The outline of this article is as follows: In Section~\ref{sec:preliminaries-indep-2},
we define our setting and the types of graphs we use to state the
combinatorial characterization of independent output sums and singular
variance-covariance matrices. These characterizations are given in
Section~\ref{sec:results} and examples are given in Section~\ref{sec:examples}. In
Section~\ref{sec:proofs}, we finally prove the results of Section~\ref{sec:results}.

\section{Preliminaries}\label{sec:preliminaries-indep-2}
In this article, a \emph{finite Markov chain} consists of a finite state space
$\{1,\ldots,M\}$, a finite set of transitions $\mathcal E$ between the states,
each with a positive transition probability, and a unique\footnote{This is no restriction as we can always add an additional state
  and the transitions starting in this state with probabilities corresponding
  to the non-degenerate initial distribution. The output functions are then
  extended by mapping these transitions to~$0$.} initial state
$1$. We
denote the transition probability for a transition $e$ by $p_{e}$. Then we
have
\begin{equation*}
  \sum_{\substack{e\in\mathcal E\\e\text{ starts in }i}}p_{e}=1
\end{equation*}for all states $i$. Note that
for all transitions $e\in\mathcal E$, we require $p_{e}>0$. Further note that
there may be multiple transitions between two states but always only a finite
number of them. This may be useful for different outputs later on.

The transition probabilities induce a probability
distribution on the paths of length $n$ starting in the initial state $1$. Let
$X_{n}$ be a random path of length $n$ according to this model. 

All states
of the underlying digraph of the Markov chain are
assumed to be
accessible from the initial state. Contracting each strongly connected component of
the underlying digraph gives an acyclic digraph, the so-called
condensation. We assume that this condensation has only one leaf (i.e., one
vertex with out-degree $0$). The strongly connected component corresponding to
this leaf is called \emph{final component}. We assume that the period (i.e.,
the greatest common divisor of the lengths of all cycles) of this final component is $1$. We call such
Markov chains \emph{finally connected} and \emph{finally aperiodic}.

Additionally we use \emph{output functions} $k\colon \mathcal E\to \mathbb
R$. The corresponding
random variable $K_{n}$ is the sum of all values of $k$ along a random path
$X_{n}$. We call $K_{n}$ the \emph{output sum} of the Markov chain with
respect to $k$.
We use several output functions $k_{1}$, \dots, $k_{m}$ and the corresponding
random variables $K_{n}^{(1)}$, \dots, $K_{n}^{(m)}$ simultaneously for one
Markov chain.

\begin{remark}
  Usually, one is interested in a function evaluated at the sequence of random states of the Markov
  chain. This is equivalent to this setting with an output function of the
  transitions: For the one direction, the restriction of the output function to the
  outgoing transitions of one state is constant for every state. For the other
  direction, we use the standard construction of the Markov chain with state
  space $\{(i,j)\mid 1\leq i,j\leq M\}$.

  Thus, our setting can be seen as a Markov source with a finite set of
  $m$-dimensional vectors as alphabet.
\end{remark}

We are interested in the joint distribution of the random
variables $K_{n}^{(1)}$, \dots, $K_{n}^{(m)}$. For one coordinate, we will prove that the expected
value of $K_{n}^{(i)}$ is $e_{i}n+\bigOh(1)$ for constants $e_{i}$. The
variance-covariance matrix of $K_{n}^{(1)}$, \dots, $K_{n}^{(m)}$ will turn
out to be $\Sigma
n+\bigOh(1)$ for a matrix $\Sigma$. We call $\Sigma$ the asymptotic variance-covariance matrix and its
entries the asymptotic variances and covariances. 

We will combinatorically
characterize Markov chains with output functions such that the
variance-covariance matrix is regular. Furthermore, we give a combinatorial
characterization of the case that the asymptotic covariance is zero. As this
is only influenced by two output functions, we restrict ourselves to
$K_{n}^{(1)}$ and $K_{n}^{(2)}$ in this case. 

\begin{remark}\label{remark:transducer}
  Markov chains with output functions are closely related to
  transducers with a probability distribution for the input: A transducer is defined to consist of a finite set of states, an initial
state, a set of final states, an input alphabet, an output alphabet and
 a finite set of transitions, where a transition starts in one state, leads to
 another state and has an input and an output label from the corresponding
 alphabets. See \cite[Chapter~1]{Berthe-Rigo:2010:combin} for a more formal definition. An example of a transducer is given in
 Figure~\ref{fig:first-ex-intro}. We label the transitions with ``input label
 $\mid$ output label''. The initial state is marked by an ingoing arrow
 starting at no other state and the final states are marked by 
 outgoing arrows leading to no other state.

\begin{figure}
  \centering
  \begin{tikzpicture}[auto, initial text=, >=latex, accepting text=,
  accepting/.style=accepting by arrow, every state/.style={minimum
    size=1.3em}]
\node[state, accepting, accepting where=below, initial, initial where=right]
(v0) at (1.300000, 0.000000) {};
%\path[->] (v0.270.00) edge node[rotate=450.00, anchor=south] {$0$} ++(270.00:5ex);
\node[state, accepting, accepting where=below] (v1) at (-1.300000, 0.000000)
{};
%\path[->] (v1.270.00) edge node[rotate=450.00, anchor=south] {$1$} ++(270.00:5ex);
\path[->] (v0.190.00) edge node[rotate=360.00, anchor=north] {$1\mid 0$} (v1.350.00);
\path[->] (v1.10.00) edge node[rotate=0.00, anchor=south] {$0\mid 1$} (v0.170.00);
\path[->] (v0) edge[loop above] node {$0\mid 1$} ();
\path[->] (v1) edge[loop above] node {$1\mid 1$} ();
\end{tikzpicture}
  \caption{A small example of a transducer.}
  \label{fig:first-ex-intro}
\end{figure}
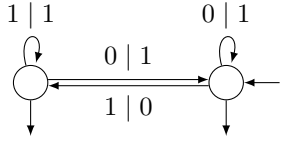

  A Markov chain with one output function can be obtained by a transducer with
  additional probability distributions for the outgoing transitions of each
  state and by deleting the input labels of the transducer. 

If we have two transducers where only the outputs of the
  transitions are different, we can choose probability distributions for the
  outgoing transitions of each state. Then we obtain a Markov chain with two
  output functions.
Thus, we can use our results for two
  output functions (see
  Examples~\ref{example:blocks} and~\ref{example:00-11}).
\end{remark}

\begin{remark}\label{remark:final-outputs}
We can additionally have \emph{final output functions} $f\colon\{1,\ldots, M\}\to \R$
for each output function $k$ and redefine
the random variable $K_{n}$ as the sum of the values of the output
function $k$ along a random path $X_{n}$ plus the final output $f$ of the final
state of this path. We will see that this does not change the main terms of the
asymptotic behavior. Thus, the results in Section~\ref{sec:results} are still
valid (see also Remark~\ref{remark:final-output-not-important}).  
\end{remark}

\begin{remark}\label{remark:exit-weights}
  The Parry measure are probabilities $p_{e}$ such that every path of length $n$
  has the same weight up to a constant factor
  (cf.\ \cite{Shannon:1948:mathem-theor-commun,Parry:1964:intrin-markov}). If
  we are interested in  probabilities such that every path of length $n$
  starting in the initial state $1$ has
  exactly the same weight, we have to use the Parry measure with additional
  \emph{exit weights}: Each path is additionally weighted by these exit weights according to the
  final state of the path (cf.\
  \cite[Lemma~4.1]{Heuberger-Kropf-Prodinger:ta:analy-carries}). 

However, the
  sum of the weights of all paths of length $n$ is no longer normalized: It
  differs from $1$ by an exponentially small error term for $n\to\infty$.  This gives an approximate
  equidistribution of all paths of length $n$. As we are interested in the
  asymptotic behavior for $n\to\infty$, the expected value and the variance of
  the corresponding measurable function $K_{n}$ can still be defined as
  usual.

  If we use these exit weights $w_{s}$ in our setting, the main terms of the
  asymptotic behavior are not changed. Thus, the theorems in
  Section~\ref{sec:results} are still valid (see also
  Remark~\ref{remark:final-output-not-important}).

  These exit weights can also be used to simulate final and non-final states of
  a transducer by setting the weights of non-final states to $0$. However, not
  all exit weights of the final component are allowed to be zero.
\end{remark}

Next, we define some subgraphs of the underlying graph of the final
component and extend the probabilities and the output functions to these subgraphs.

\begin{definition}We define the following types of directed graphs as subgraphs
  of the final component of the Markov chain.
\begin{itemize}
\item A \emph{rooted tree} is a weakly connected digraph with one vertex which has out-degree
$0$, while all other vertices have out-degree $1$. The vertex with out-degree $0$
is called the \emph{root} of the tree.

\item A \emph{functional digraph} is a digraph whose vertices have out-degree
$1$. Each component of a
functional digraph consists of a directed cycle and some trees rooted at
vertices of the cycle. For a functional digraph $D$, let $\mathcal C_{D}$ be the
set of all cycles of $D$.\end{itemize}
\end{definition}

The probabilities $p_{e}$ can be multiplicatively extended to a weight function for arbitrary subgraphs of the
Markov chain: Let $D$ be any subgraph of the underlying graph
of the Markov chain, then define the weight of $D$ by
\begin{equation*}
  p_{D}=\prod_{e\in D}p_{e}.
\end{equation*}
For a path $P$ of length $n$, this is exactly the probability $\mathbb P(X_{n}=P)$.

However, the output function $k$ is additively extended to cycles $C$ of the
underlying graph of the Markov chain by
\begin{equation*}
  k(C)=\sum_{e\in C}k(e).
\end{equation*}
This can further be extended to functional digraphs:
\begin{definition}\label{def:sum-over-graphs-2}
Let $\mathcal D_{1}$ and $\mathcal D_{2}$ be the sets of all spanning
subgraphs of the final component of the
Markov chain $\M$ which are functional digraphs and have one and two
components, respectively.

For functions $g$ and $h\colon\mathcal E\to \mathbb R$, we define
\begin{align*}
g(\mathcal D_{1})&=\sum_{D\in\mathcal D_{1}}p_D\sum_{C\in\mathcal
  C_{D}}g(C),\\
(g,h)(\mathcal D_{1})&=\sum_{D\in\mathcal D_{1}}p_{D}\sum_{C\in\mathcal
C_{D}}g(C)h(C),\\
(g,h)(\mathcal D_{2})&=\sum_{D\in\mathcal D_{2}}p_{D}\sum_{C_{1}\in\mathcal
C_{D}}\sum_{\substack{C_{2}\in\mathcal C_{D}\\C_{2}\neq
  C_{1}}}g(C_{1})h(C_{2}).
\end{align*}

As functions $g$ and $h$, we use the output functions $k_{1}$, \dots, $k_{m}$
and the constant function $\mathds 1(e)=1$.
\end{definition}
\section{Main Results}\label{sec:results}
In this section, we present the combinatorial characterization of output
functions of Markov chains which are asymptotically independent and
of Markov chains with output functions with a singular
variance-covariance matrix. The proofs can be found in Section~\ref{sec:proofs}.

If the underlying directed graph of the Markov chain is
$j$-regular, every transition has probability $1/j$, we only have two
output functions and the first output function
$k_{1}\colon\mathcal E\to\{0,1,\ldots,j-1\}$ is such that the restrictions of $k_{1}$ to the outgoing
transitions of one state is bijective for every state, then these results are
stated in \cite{Heuberger-Kropf-Wagner:2014:combin-charac} (see also Remark~\ref{remark:transducer}).

The next definition describes a sequence of random variables whose difference from its expected value is bounded for all elements.
\begin{definition}
  The output sum $K_{n}$ of a Markov chain is called
\emph{quasi\hyph deter\-min\-is\-tic} if there is a constant $a\in\R$ such that 
\begin{equation*}K_{n}=an+\bigOh(1)\end{equation*}
holds for all $n$. 
\end{definition}
Next we give the combinatorial characterization of output sums with bounded
variance in the case of a not necessarily independent identically distributed
input sequence.
\begin{theorem}\label{theorem:bounded-variance}
  For a finite, finally connected and finally aperiodic Markov chain $\M$ with an output function $k$, the following assertions are equivalent:
  \begin{enumerate}[label=(\alph*)]
  \item\label{item:var0-2} The asymptotic variance $v$ of the output sum is
    $0$.
\item \label{item:somewalks-2}There exists a state $s$ of the final component
  and a constant $a\in\R$ such that
  \begin{equation*}
    k(C)=a\mathds 1(C)
  \end{equation*}
holds for every closed walk $C$ of the final component visiting the state $s$
exactly once.
\item\label{item:allcycles-2}There exists a constant $a\in\R$ such that
  \begin{equation*}
    k(C)=a\mathds 1(C)
  \end{equation*}
holds for every directed cycle $C$ of the final component of  $\M$.
  \end{enumerate}

In that case, $an+\bigOh(1)$ is the expected value of the output sum and
Statement~\itemref{item:somewalks-2} holds for all states $s$ of the final
component.

If $\M$ is furthermore strongly connected, the following assertion is also
equivalent:
\begin{enumerate}[label=(\alph*)]\setcounter{enumi}{3}
\item\label{item:quasi-det-2}The random variable $K_{n}$ is
  quasi-deterministic with constant $a$.
\end{enumerate}
\end{theorem}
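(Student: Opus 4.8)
The plan is to separate the purely combinatorial equivalence \ref{item:somewalks-2}$\Leftrightarrow$\ref{item:allcycles-2} from their equivalence with the analytic statement \ref{item:var0-2}, and then to read off \ref{item:quasi-det-2} in the strongly connected case via a discrete potential.

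First I would prove \ref{item:somewalks-2}$\Leftrightarrow$\ref{item:allcycles-2}. Every closed walk $C$ decomposes into directed cycles $C_{1},\dots,C_{r}$, and both $k$ and $\mathds 1$ are additive under this decomposition, so $k(C)=\sum_{i}k(C_{i})$ and $\mathds 1(C)=\sum_{i}\mathds 1(C_{i})$. Hence \ref{item:allcycles-2} gives $k(C)=a\mathds 1(C)$ for every closed walk, in particular for those visiting any fixed state $s$ exactly once, which is \ref{item:somewalks-2} for every such $s$. For the converse, let $C'$ be an arbitrary directed cycle of the final component. If $C'$ passes through $s$, it is itself a closed walk visiting $s$ exactly once and \ref{item:somewalks-2} gives $k(C')=a\mathds 1(C')$ directly. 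Otherwise fix a vertex $v$ of $C'$ and, using strong connectivity of the final component, choose simple paths $P$ from $s$ to $v$ and $Q$ from $v$ to $s$, each meeting $s$ only at the prescribed endpoint; then $W_{1}=P\,C'\,Q$ and $W_{2}=P\,Q$ are closed walks visiting $s$ exactly once, and subtracting $k(W_{2})=a\mathds 1(W_{2})$ from $k(W_{1})=a\mathds 1(W_{1})$ cancels the contributions of $P$ and $Q$ and leaves $k(C')=a\mathds 1(C')$, i.e.\ \ref{item:allcycles-2}.

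Next I would link these to the variance. The key ingredient is an explicit formula for the asymptotic variance $v$, obtained from the second-order perturbation of the dominant eigenvalue of the weighted adjacency matrix of the final component; its combinatorial form follows from the Matrix-Tree Theorem and its two-component extension \cite{Chaiken:1982:matrixtree,Moon:1994:matrixtree}. This expresses $v$ as a quotient with a strictly positive denominator and a numerator that is a quadratic expression in the cycle sums $k(C)$ and $\mathds 1(C)$, assembled from the sums $(k,k)(\mathcal D_{1})$, $(k,\mathds 1)(\mathcal D_{1})$, $(k,k)(\mathcal D_{2})$ and the corresponding $\mathds 1$-sums over the spanning functional digraphs $\mathcal D_{1},\mathcal D_{2}$. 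After identifying the constant $a$ with the mean rate $e$ from $\Expect K_{n}=en+\bigOh(1)$, a Lagrange-type rearrangement recasts this numerator as a nonnegative combination of squares $\bigl(k(C_{1})\mathds 1(C_{2})-k(C_{2})\mathds 1(C_{1})\bigr)^{2}$ over pairs of cycles, all carrying positive subgraph weights $p_{D}$. Therefore $v=0$ forces $k(C_{1})\mathds 1(C_{2})=k(C_{2})\mathds 1(C_{1})$ for the cycle pairs that occur, and strong connectivity of the final component guarantees enough such pairs to propagate a common ratio $k(C)/\mathds 1(C)=a$ to every cycle; this is exactly \ref{item:allcycles-2}. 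The converse direction of \ref{item:var0-2}$\Leftrightarrow$\ref{item:allcycles-2} is immediate once the formula is in place, and the identification $a=e$ shows that $an+\bigOh(1)$ is the expected value.

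Finally, for a strongly connected $\M$ I would obtain \ref{item:quasi-det-2} from \ref{item:allcycles-2} through a discrete potential. Condition \ref{item:allcycles-2} states that the edge function $k-a\mathds 1$ has vanishing sum along every directed cycle; by the discrete Poincar\'e lemma on the strongly connected graph this is equivalent to the existence of a function $h$ on the states with $k(e)-a=h(\text{head}(e))-h(\text{tail}(e))$ for every transition $e$. Summing along a random path $X_{n}$, the right-hand side telescopes and yields $K_{n}=an+h(\text{final state of }X_{n})-h(1)=an+\bigOh(1)$, which is \ref{item:quasi-det-2}. Conversely, $K_{n}=an+\bigOh(1)$ makes $K_{n}-an$ bounded, hence its variance is $\bigOh(1)$ and the linear coefficient $v$ vanishes, giving \ref{item:var0-2}. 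I expect the third paragraph to be the main obstacle: producing the explicit variance formula and, above all, recognizing its numerator as the nonnegative sum of squares, since this rests on the two-component Matrix-Tree expansion together with the algebraic rearrangement that exposes the sum-of-squares structure.
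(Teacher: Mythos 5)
Your proposal is correct and follows essentially the route the paper intends: the paper's own proof of this theorem consists only of the remark that it ``follows by the same arguments as in \cite[Theorem~3.1]{Heuberger-Kropf-Wagner:2014:combin-charac}'', and those arguments are exactly your three ingredients --- the decomposition of closed walks into cycles for the equivalence of (b) and (c), the Matrix-Tree/functional-digraph formula for the asymptotic variance together with its sum-of-squares positivity analysis for the equivalence with (a), and the telescoping discrete potential for (d) in the strongly connected case. The one step you assert rather than carry out (rearranging the variance numerator into a nonnegative combination of squares $\bigl(k(C_{1})\mathds 1(C_{2})-k(C_{2})\mathds 1(C_{1})\bigr)^{2}$) is precisely the key lemma of the cited reference, so nothing in your plan would fail.
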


In the case that the value of the output function is $0$ or $1$ for each transition, there are only two trivial
output functions with asymptotic variance zero.
\begin{corollary}\label{cor:01output-2}
Let $k\colon\mathcal E\to\{0,1\}$. Then the asymptotic variance $v$ is zero
if and only if the
output function $k$ is constant on the final component.
\end{corollary}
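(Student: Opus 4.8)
The plan is to reduce everything to the equivalence of assertions~\itemref{item:var0-2} and~\itemref{item:allcycles-2} in Theorem~\ref{theorem:bounded-variance}: the asymptotic variance $v$ vanishes if and only if there is a constant $a\in\R$ with $k(C)=a\mathds 1(C)$ for every directed cycle $C$ of the final component. Here $\mathds 1(C)$ is just the length of $C$, so the condition reads $\sum_{e\in C}k(e)=a\cdot\lvert C\rvert$ for all cycles $C$.

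The direction ``$k$ constant $\Rightarrow v=0$'' is immediate: if $k\equiv c$ on the final component, then $k(C)=c\lvert C\rvert$ for every cycle, so $a=c$ works and Theorem~\ref{theorem:bounded-variance} yields $v=0$.

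For the converse, I would first pin down the constant $a$. Since each $k(e)\in\{0,1\}$, summing over any cycle gives $0\le a\lvert C\rvert\le\lvert C\rvert$, hence $a\in[0,1]$; moreover $a\lvert C\rvert=\sum_{e\in C}k(e)$ is a nonnegative integer for every cycle. Because the final component is finally aperiodic, the greatest common divisor of its cycle lengths is $1$, so by B\'ezout there are cycles $C_1,\dots,C_r$ and integers $c_1,\dots,c_r$ with $\sum_i c_i\lvert C_i\rvert=1$. Then $a=\sum_i c_i\,(a\lvert C_i\rvert)$ is an integer combination of integers, hence $a\in\mathbb Z$, and together with $a\in[0,1]$ this forces $a\in\{0,1\}$. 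This integrality step, extracting $a\in\{0,1\}$ from aperiodicity, is the main point of the argument.

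It remains to turn $a\in\{0,1\}$ into constancy of $k$, using that the final component is strongly connected and therefore every transition lies on some directed cycle. If $a=0$, then $\sum_{e\in C}k(e)=0$ for every cycle; as $k\ge0$, every transition on a cycle satisfies $k(e)=0$, so $k\equiv0$. If $a=1$, then $\sum_{e\in C}\bigl(1-k(e)\bigr)=0$ for every cycle, and the same nonnegativity argument applied to $1-k$ gives $k\equiv1$. In both cases $k$ is constant on the final component, completing the equivalence.
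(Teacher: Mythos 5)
Your proof is correct and follows essentially the same route as the paper, which simply defers to the analogous corollary in the cited transducer paper: both reduce the claim to the cycle criterion of Theorem~\ref{theorem:bounded-variance} and then use final aperiodicity to force the constant $a$ into $\{0,1\}$, after which nonnegativity of $k$ and of $1-k$ on cycles gives constancy. Your write-up supplies the details (the B\'ezout integrality step and the observation that every transition of the strongly connected final component lies on a directed cycle) that the paper leaves to the reference.
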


The next theorem extends Theorem~\ref{theorem:bounded-variance} to the joint
distribution of several simultaneous output sums by combinatorically describing the case of a singular
variance-covariance matrix.
\begin{theorem}\label{theorem:sing-matrix}
  Let $\M$ be a finite, finally connected, finally aperiodic
  Markov chain with $m$ output functions $k_{1}$, \dots, $k_{m}$. Then the variance-covariance matrix $\Sigma$ is regular if and
  only if the functions $\mathds 1$, $k_{1}$, \dots, $k_{m}$ are linearly independent as
  functions from
  the vector space of cycles of the final component to the real numbers, i.e.\ there do not exist real
  constants $a_{0}$, \dots, $a_{m}$, not all
  zero, such that
  \begin{equation}\label{eq:linear-dependence-cycles}
    a_{0}\mathds 1(C)+a_{1}k_{1}(C)+\cdots+a_{m}k_{m}(C)=0
  \end{equation}
  holds for all cycles (or equivalently, for all closed walks) $C$ of the final component.

The random variables $K_{n}^{(1)}$, \dots,
  $K_{n}^{(m)}$ are asymptotically jointly normally distributed
  if and only if $\Sigma$ is regular.
\end{theorem}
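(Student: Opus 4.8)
The plan is to reduce the multivariate statement to the scalar case already settled in Theorem~\ref{theorem:bounded-variance} by a quadratic-form argument. Writing $\boldsymbol K_n=(K_n^{(1)},\dots,K_n^{(m)})^{\mathsf T}$, the matrix $\Sigma$ arises as the limit of $\frac1n\Cov(\boldsymbol K_n)$ and is therefore positive semidefinite; hence $\Sigma$ is singular if and only if there is a nonzero vector $\boldsymbol a=(a_1,\dots,a_m)$ with $\boldsymbol a^{\mathsf T}\Sigma\boldsymbol a=0$. The key point is to identify this quadratic form with a scalar asymptotic variance: for fixed $\boldsymbol a$, the linear combination $\sum_i a_ik_i$ along each path shows that $\sum_i a_iK_n^{(i)}$ is itself the output sum of $\M$ for the single output function $k=\sum_i a_ik_i$. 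Its variance equals $\boldsymbol a^{\mathsf T}\Cov(\boldsymbol K_n)\boldsymbol a=(\boldsymbol a^{\mathsf T}\Sigma\boldsymbol a)\,n+\bigOh(1)$, so its asymptotic variance is exactly $\boldsymbol a^{\mathsf T}\Sigma\boldsymbol a$.

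Next I would apply Theorem~\ref{theorem:bounded-variance} to $k$: the asymptotic variance $\boldsymbol a^{\mathsf T}\Sigma\boldsymbol a$ vanishes if and only if there is a constant $a_0$ with $k(C)=a_0\mathds 1(C)$ for every cycle $C$ of the final component, that is,
\[
  a_1k_1(C)+\cdots+a_mk_m(C)-a_0\mathds 1(C)=0\qquad\text{for all cycles }C,
\]
which is precisely a relation of the form~\eqref{eq:linear-dependence-cycles}. In the forward direction the coefficients are not all zero because $\boldsymbol a\neq\boldsymbol 0$; conversely, if a nontrivial relation~\eqref{eq:linear-dependence-cycles} holds, then $a_1,\dots,a_m$ cannot all vanish, since otherwise $a_0\mathds 1(C)=0$ would force $a_0=0$ as every cycle has positive length, so $\boldsymbol a=(a_1,\dots,a_m)$ is a nonzero null vector of $\Sigma$. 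The equivalence of "all cycles" and "all closed walks" follows because each closed walk decomposes as a multiset of edges into cycles and both $\mathds 1$ and the $k_i$ are additive, so the cycles span the cycle space and a functional vanishing on all of them vanishes on every closed walk.

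Finally, the normality claim rests on the general multivariate central limit theorem for finally connected and finally aperiodic Markov chains (cf.\ \cite[Theorem~2.22]{Drmota:2009:random}): when $\Sigma$ is regular, $(\boldsymbol K_n-\Expect\boldsymbol K_n)/\sqrt n$ converges to a nondegenerate normal law with covariance $\Sigma$. When $\Sigma$ is singular, the null vector $\boldsymbol a$ yields a combination $\boldsymbol a^{\mathsf T}\boldsymbol K_n$ of bounded variance, so $\boldsymbol a^{\mathsf T}(\boldsymbol K_n-\Expect\boldsymbol K_n)/\sqrt n\to0$ in probability and any limit is concentrated on the hyperplane $\boldsymbol a^{\mathsf T}\boldsymbol x=0$, hence not a full joint normal distribution. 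I expect the genuinely new content here to be the translation of $\boldsymbol a^{\mathsf T}\Sigma\boldsymbol a=0$ into the cycle relation via Theorem~\ref{theorem:bounded-variance}; the main analytic effort lies rather in the inputs this reduction invokes, namely the growth $\Cov(\boldsymbol K_n)=\Sigma n+\bigOh(1)$ with the asserted $\Sigma$ and the central limit theorem itself.
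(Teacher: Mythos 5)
Your proposal is correct and follows essentially the same route as the paper: both reduce singularity of $\Sigma$ to the vanishing asymptotic variance of a single linear combination $\sum_i a_ik_i$ and then invoke Theorem~\ref{theorem:bounded-variance} (with the $a_0\mathds 1$ term absorbing the centering), and both obtain normality from the multidimensional quasi-power/central limit theorem. The only cosmetic difference is that you extract the null direction directly from positive semidefiniteness of $\Sigma$, whereas the paper first diagonalizes $\Sigma$ by an orthogonal matrix and reads off a zero diagonal entry; these are interchangeable.
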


\begin{remark}
  Theorems~\ref{theorem:bounded-variance} and~\ref{theorem:sing-matrix}
  and Corollary~\ref{cor:01output-2} are independent of the choice of the
  probabilities of the transitions. Only the structure of the underlying graph
  of the Markov chain and the output functions influence the result. Note,
  however, that according to our general assumptions, all transitions have
  \emph{positive} probability.
\end{remark}

The next theorem gives a combinatorial characterization of output functions of
a Markov chain which are asymptotically independent. As this characterization
is given by the covariance, we can restrict ourselves to two output functions
without loss of generality.
\begin{theorem}\label{thm:comb-2}Let $\M$ be a finite, finally
  connected, finally aperiodic Markov chain with two output functions $k_{1}$ and $k_{2}$. 
  
  Then the random variable $K_{n}^{(i)}$ has the expected
  value $e_{i}n+\bigOh(1)$ and the variance $v_{i}n+\bigOh(1)$ where the constants are
  \begin{align}\label{eq:expected-value-indep}
    e_{i}&=\frac{k_{i}(\mathcal D_1)}{\mathds 1(\mathcal D_1)},\\
v_{i}&=\frac{1}{\mathds 1(\mathcal D_{1})}\big((k_{i}-e_{i}\mathds
1,k_{i}-e_{i}\mathds 1)(\mathcal D_{1})-(k_{i}-e_{i}\mathds 1,k_{i}-e_{i}\mathds 1)(\mathcal D_{2})\big)\nonumber
  \end{align}
for $i=1$, $2$.

The
 covariance of $K_{n}^{(1)}$ and $K_{n}^{(2)}$ is  $cn+\bigOh(1)$ with the constant
\begin{equation*}
c=\frac{1}{\mathds 1(\mathcal D_{1})}\big((k_{1}-e_{1}\mathds
1,k_{2}-e_{2}\mathds 1)(\mathcal D_{1})-(k_{1}-e_{1}\mathds 1,k_{2}-e_{2}\mathds 1)(\mathcal D_{2})\big).
\end{equation*}

  The random variables $K^{(1)}_{n}$ and $K^{(2)}_{n}$ are asymptotically independent if and only if
\begin{equation*}
(k_{1}-e_{1}\mathds
1,k_{2}-e_{2}\mathds 1)(\mathcal D_{1})=(k_{1}-e_{1}\mathds 1,k_{2}-e_{2}\mathds 1)(\mathcal D_{2}).
\end{equation*}
\end{theorem}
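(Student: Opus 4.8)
The plan is to encode the joint distribution of $(K_n^{(1)},K_n^{(2)})$ through a bivariate transfer matrix and to extract the mean, variance and covariance from the derivatives of its dominant eigenvalue, finally translating those analytic derivatives into the functional-digraph sums of Definition~\ref{def:sum-over-graphs-2} by means of the Matrix-Tree Theorem.

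First I would set up the transfer matrix $P(u_1,u_2)$ whose $(i,j)$ entry is $\sum_{e\colon i\to j} p_e \exp(u_1 k_1(e)+u_2 k_2(e))$, so that the joint moment generating function $\Expect[\exp(u_1 K_n^{(1)}+u_2 K_n^{(2)})]$ is a fixed bilinear form in $P(u_1,u_2)^n$ (the final output and exit weights of Remarks~\ref{remark:final-outputs} and~\ref{remark:exit-weights} only perturb the bounding vectors and hence the $\bigOh(1)$ terms). Because $\M$ is finally connected and finally aperiodic, Perron--Frobenius theory gives a simple dominant eigenvalue $\lambda(u_1,u_2)$ of the block belonging to the final component, analytic near the origin with $\lambda(0,0)=1$, while the remaining spectrum is strictly smaller in modulus and contributes only exponentially small corrections. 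The quasi-power theorem (cf.\ \cite[Theorem~2.22]{Drmota:2009:random}) then yields $\Expect[K_n^{(i)}]=n\,\partial_{u_i}\log\lambda(0,0)+\bigOh(1)$ and a variance-covariance matrix $n\,\mathrm{Hess}(\log\lambda)(0,0)+\bigOh(1)$, with asymptotic joint normality exactly when this Hessian is regular, which is the content of Theorem~\ref{theorem:sing-matrix} for $m=2$.

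The combinatorial identification is the heart of the argument, carried out by differentiating the characteristic equation $\det(\lambda I - P(u_1,u_2))=0$ implicitly. The first derivatives give $e_i=\partial_{u_i}\log\lambda(0,0)=\sum_e \pi_{\mathrm{start}(e)}\,p_e\,k_i(e)$, where $\pi$ is the stationary distribution of the final component; by the Markov-chain-tree theorem $\pi$ is proportional to sums of in-trees, so adjoining the edge $e$ to a spanning in-tree rooted at $\mathrm{start}(e)$ produces exactly a one-cycle spanning functional digraph, and grouping the resulting $(\text{tree},\text{edge})$ pairs by the digraph $D\in\mathcal D_1$ turns the numerator into $k_i(\mathcal D_1)$ and the normalizer into $\mathds 1(\mathcal D_1)$, whence $e_i=k_i(\mathcal D_1)/\mathds 1(\mathcal D_1)$. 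For the second derivatives I would expand the determinant and apply the all-minors extension of the Matrix-Tree Theorem from \cite{Chaiken:1982:matrixtree,Moon:1994:matrixtree}. The resulting expression reorganizes into contributions supported on a single cyclic spanning component, yielding the one-component sum $(k_i-e_i\mathds 1,k_j-e_j\mathds 1)(\mathcal D_1)$ whose cross terms between distinct edges of the same cycle are already built into Definition~\ref{def:sum-over-graphs-2}, and contributions supported on two distinct cyclic components, yielding the two-component sum $(k_i-e_i\mathds 1,k_j-e_j\mathds 1)(\mathcal D_2)$ with the opposite sign that appears in the stated formulas; the centering by $e_i\mathds 1$ is precisely what absorbs the correction $-\,\partial_{u_i}\lambda\,\partial_{u_j}\lambda$ coming from the logarithm. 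This is the step I expect to be the main obstacle: matching the signs produced by the determinant expansion with those of the Matrix-Tree Theorem, and verifying that the one- and two-component spanning forests that survive are exactly $\mathcal D_1$ and $\mathcal D_2$.

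Finally, the independence criterion follows formally. Having established $\Cov(K_n^{(1)},K_n^{(2)})=cn+\bigOh(1)$ with $c$ the off-diagonal Hessian entry, I would note that $\Sigma$ is positive semidefinite, so $c^2\le v_1 v_2$ and the covariance is bounded precisely when $c=0$. When $\Sigma$ is regular the pair is asymptotically jointly normal by Theorem~\ref{theorem:sing-matrix}, and jointly normal coordinates are independent if and only if their covariance vanishes; the degenerate case $v_1 v_2=0$ forces $c=0$ and quasi-deterministic behavior of one coordinate, which is trivially asymptotically independent of the other. In all cases asymptotic independence is equivalent to $c=0$, and by the covariance formula this is exactly the equality $(k_1-e_1\mathds 1,k_2-e_2\mathds 1)(\mathcal D_1)=(k_1-e_1\mathds 1,k_2-e_2\mathds 1)(\mathcal D_2)$.
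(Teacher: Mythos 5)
Your proposal follows essentially the same route as the paper: the bivariate transfer matrix, Perron--Frobenius and the Quasi-Power Theorem to reduce everything to derivatives of the dominant eigenvalue, implicit differentiation of the characteristic polynomial, and the All-Minors Matrix-Tree Theorem to identify those derivatives with the sums over $\mathcal D_1$ and $\mathcal D_2$ (the paper's Lemma~\ref{lem:comb-connection}), plus the same case analysis for the independence criterion. The only cosmetic difference is that you phrase the first-order computation via the stationary distribution and the Markov-chain-tree theorem rather than directly via the partial derivatives of $f(x_1,x_2,z)=\det(I-zA(x_1,x_2))$.
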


In the case that the expected values of $K^{(1)}_{n}$ and $K^{(2)}_{n}$ are
both bounded, i.e.\ $e_{1}=e_{2}=0$,
these random variables are asymptotically independent if and only if
\begin{equation*}
  (k_{1},k_{2})(\mathcal D_{1})=(k_{1},k_{2})(\mathcal D_{2}).
\end{equation*}

\section{Examples}\label{sec:examples}
In this section, we first prove the asymptotic joint normal
distribution of the Hamming weights of two different digit expansions by using
Theorem~\ref{theorem:sing-matrix}. Then we investigate the independence of
length $2$ blocks of $0$-$1$-sequences by using
Theorem~\ref{thm:comb-2}. In both cases we start with two transducers to
construct a Markov chain with two output functions, once as a
Cartesian product, once via Remark~\ref{remark:transducer}.
\begin{figure}
\centering
\newcommand{\Bold}[1]{\mathbf{#1}}\begin{tikzpicture}[auto, initial text=, >=latex, accepting text=, accepting/.style=accepting by arrow]
\node[state, initial] (v0) at (0.000000,
0.000000) {$1$};
\path[->] (v0.90.00) edge node[rotate=270.00, anchor=north] {$0$} ++(90.00:5ex);
\node[state] (v1) at (3.000000, 0.000000) {$2$};
\path[->] (v1.270.00) edge node[rotate=450.00, anchor=south] {$0$} ++(270.00:5ex);
\node[state] (v2) at (6.000000, 0.000000) {$w+1$};
\path[->] (v2.90.00) edge node[rotate=90.00, anchor=north] {$1$} ++(90.00:5ex);
\node[state] (v3) at (3, 6) {$w$};
\path[->] (v3.180.00) edge node[rotate=0.00, anchor=south] {$0$}
++(180.00:5ex);
\node[state, minimum size=0em] (v4) at (3, 2.5) {};
\node[state, minimum size=0em] (v5) at (3, 3.5) {};
\node[state, minimum size=0em] (v6) at (3, 4.5) {};
\path[->] (v0) edge node[rotate=0.00, anchor=north] {$1\mid 1$} (v1);
\path[->] (v1) edge node[auto, align=center] {$0\mid
  0$\\$1\mid 0$} (v4);
\path[->] (v4) edge[dotted] node {} (v5);
\path[->] (v5) edge[dotted] node {} (v6);
\path[->] (v6) edge[dotted] node {} (v3);
\path[->] (v0) edge[loop below] node {$0\mid 0$} ();
\path[->] (v2) edge node[rotate=360.00, anchor=north] {$0\mid 1$} (v1);
\path[->] (v2) edge[loop below] node {$1\mid 0$} ();
\path[->] (v3) edge node[rotate=63.43, anchor=south] {$0\mid 0$} (v0);
\path[->] (v3) edge node[rotate=-63.43, anchor=south] {$1\mid 0$} (v2);
\end{tikzpicture}
\caption{Transducer $\mathcal T(w)$ to compute the Hamming weight of the \mbox{width-$w$} non-adjacent form.}
\label{fig:wNAF-indep-2}
\end{figure}
\begin{example}[Width-$w$ non-adjacent forms]\label{example:w-naf}
Let $2\leq w_{1}< w_{2}$ be integers. We consider the asymptotic joint
distribution of the Hamming weight of the width-$w_{1}$ non-adjacent form ($w_{1}$-NAF) and the Hamming weight
of the $w_{2}$-NAF. The width-$w$ non-adjacent form is a binary digit
expansion with digit set $\{0,\pm1,\pm3,\ldots,\pm(2^{w-1}-1)\}$ and
the syntactical rule that at most one of any $w$ adjacent digits is non-zero.

 It will turn out that this distribution is normal if and only if the
variance-covariance matrix is regular. Using 
Theorem~\ref{theorem:sing-matrix}, we have to find closed
walks in the corresponding Markov chain such that
all coefficients in \eqref{eq:linear-dependence-cycles} have to be zero.

The transducer $\mathcal T(w)$ in Figure~\ref{fig:wNAF-indep-2} computes the Hamming weight of
the $w$-NAF of the integer $n$ when the input is the binary expansion of
$n$ (cf.\ \cite{Heuberger-Kropf:2013:analy}). It has $w+1$ states. Next, we construct the Cartesian product of the transducers for $w_{1}$ and
$w_{2}$ and choose any non-degenerate probability distribution, i.e.\ with all
  probabilities non-zero, for the outgoing
transitions of a state. Thus, we obtain a Markov chain $\M$ with
$(w_{1}+1)(w_{2}+1)$ states with two different
output functions $h_{1}$ and $h_{2}$ corresponding to the outputs of the transducers for $w_{1}$ and
$w_{2}$, respectively. We can now use Theorem~\ref{theorem:sing-matrix} to prove
that these two Hamming weights are asymptotically jointly normally distributed.

The Cartesian product of two closed walks in $\mathcal T(w_{1})$ and $\mathcal
T(w_{2})$ with the same input sequence is a closed walk in $\M$. We construct three
different closed walks and prove that all three coefficients in
\eqref{eq:linear-dependence-cycles} have to be zero. For brevity, we denote a closed walk
in the Cartesian product $\mathcal M$ and its projections to $\mathcal T(w_{1})$ and
$\mathcal T(w_{2})$ by the same letter.

First, we choose the closed
walk $C_{1}$ starting in state $1$ with input sequence $0$. We obtain
$h_{1}(C_{1})=0$ in $\mathcal T(w_{1})$, $h_{2}(C_{1})=0$ in $\mathcal T(w_{2})$ and $\mathds 1(C_{1})=1$. Second, we choose the
closed walk $C_{2}$ starting in $1$ with input sequence
$10^{w_{2}-1}$. Because $w_{1}<w_{2}$ and the loop at state $1$, $C_{2}$
is a closed walk in $\mathcal T(w_{1})$ and $\mathcal T(w_{2})$. We obtain
$h_{1}(C_{2})=1$ in $\mathcal T(w_{1})$, $h_{2}(C_{2})=1$ in $\mathcal T(w_{2})$ and $\mathds 1(C_{2})=w_{2}$. The third choice
depends on whether $w_{1}= w_{2}-1$ or not:
\begin{itemize}
\item $w_{1}\neq w_{2}-1$: We choose the closed walk $C_{3}$ starting in $1$ with input sequence
  $10^{w_{1}-1}10^{w_{1}-1}0^{\alpha}$ where
  $\alpha=\max(w_{2}-2w_{1},0)$. On the one hand, this is a closed walk in $\mathcal T(w_{1})$
  consisting of two times the cycle $1\to w_{1}\to1$ and $\alpha$ times the loop
  at state $1$. On the other hand, this is a closed walk in $\mathcal
  T(w_{2})$ consisting of the cycle $1\to w_{2}\to 1$ and the correct number of
  loops at state $1$. We
  obtain $h_{1}(C_{3})=2$ in $\mathcal T(w_{1})$, $h_{2}(C_{3})=1$ in
  $\mathcal T(w_{2})$ and $\mathds 1(C_{3})=\max(w_{2},
  2w_{1})$.
\item $w_{1}=w_{2}-1$: We choose the closed walk $C_{3}$ starting in $1$ with input sequence
  $10^{w_{1}-1}10^{w_{1}-1}10^{w_{1}-1}$. On the one hand, this is a closed
  walk in $\mathcal T(w_{1})$ consisting of three times the cycle $1\to
  w_{1}\to1$. On the other hand, this is a closed walk in $\mathcal T(w_{2})$
  consisting of the closed walk $1\to w_{2}\to w_{2}+1\to w_{2}\to 1$ and the correct number of
  loops at state $1$. We obtain $h_{1}(C_{3})=3$ in
  $\mathcal T(w_{1})$,
  $h_{2}(C_{3})=2$ in $\mathcal T(w_{2})$ and $\mathds 1(C_{3})=3w_{1}$.
\end{itemize}

This yields a system of linear equations for the coefficients $a_{0}$,
$a_{1}$ and $a_{2}$ with coefficient matrix
\begin{equation*}
  \begin{pmatrix}
    1&0&0\\
    w_{2}&1&1\\
    \max(w_{2},2w_{1})&2&1
  \end{pmatrix}\quad\text{ or }\quad
  \begin{pmatrix}
    1&0&0\\
    w_{2}&1&1\\
    3w_{1}&3&2
  \end{pmatrix},
\end{equation*}
 which only has the trivial solution. Thus, the
Hamming weights of the $w_{1}$-NAF and the $w_{2}$-NAF are asymptotically
jointly normally distributed, independently of the choice of the
distributions for the Markov chain.
\end{example}

\begin{figure}
\centering
\begin{subfigure}[b]{0.45\textwidth}\centering
\newcommand{\Bold}[1]{\mathbf{#1}}\begin{tikzpicture}[auto, initial text=,
  >=latex, accepting text=, accepting/.style=accepting by arrow,  every state/.style={minimum size=1.3em}]
\node[state, initial, initial where=right, accepting, accepting where=below] (v0) at (1.300000, 0.000000) {$0$};
%\path[->] (v0.270.00) edge node[rotate=-90.00, anchor=south] {$0$} ++(270.00:5ex);
\node[state, accepting, accepting where=below] (v1) at (-1.300000, 0.000000) {$1$};
%\path[->] (v1.270.00) edge node[rotate=-90.00, anchor=south] {$0$} ++(270.00:5ex);
\path[->] (v1.10.00) edge node[rotate=0.00, anchor=south] {$0\mid 1$} (v0.170.00);
\path[->] (v0) edge[loop above] node {$0\mid 0$} ();
\path[->] (v0.190.00) edge node[rotate=360.00, anchor=north] {$1\mid 0$} (v1.350.00);
\path[->] (v1) edge[loop above] node {$1\mid 0$} ();
\end{tikzpicture}
\caption{$10$-blocks}
\end{subfigure}\quad
\begin{subfigure}[b]{0.45\textwidth}\centering

\newcommand{\Bold}[1]{\mathbf{#1}}\begin{tikzpicture}[auto, initial text=,
  >=latex, accepting text=, accepting/.style=accepting by arrow, every state/.style={minimum size=1.3em}]
\node[state, initial, initial where=right, accepting, accepting where=below] (v0) at (1.300000, 0.000000) {$0$};
%\path[->] (v0.270.00) edge node[rotate=-90.00, anchor=south] {$0$} ++(270.00:5ex);
\node[state, accepting, accepting where=below] (v1) at (-1.300000, 0.000000) {$1$};
%\path[->] (v1.270.00) edge node[rotate=-90.00, anchor=south] {$0$} ++(270.00:5ex);
\path[->] (v1.10.00) edge node[rotate=0.00, anchor=south] {$0\mid 0$} (v0.170.00);
\path[->] (v0) edge[loop above] node {$0\mid 0$} ();
\path[->] (v0.190.00) edge node[rotate=360.00, anchor=north] {$1\mid 0$} (v1.350.00);
\path[->] (v1) edge[loop above] node {$1\mid 1$} ();
\end{tikzpicture}
\caption{$11$-blocks}
\end{subfigure}
\caption{Transducers to compute the number of $10$- and $11$-blocks.}
\label{fig:11-indep-2}
\end{figure}
\begin{figure}
\centering
\begin{subfigure}[b]{\textwidth}\centering
\newcommand{\Bold}[1]{\mathbf{#1}}
\begin{tikzpicture}[auto, initial text=, >=latex, accepting text=,
  accepting/.style=accepting by arrow, every state/.style={minimum
    size=1.3em}]
\useasboundingbox (-1.65, -0.65) rectangle (1.65, 1.5);
\node[state] (v0) at (1.200000, 0.000000) {$0$};
\node[state] (v1) at (-1.200000, 0.000000) {$1$};
\path[->, line width=0.8pt] (v0.190.00) edge node[rotate=360.00, anchor=north] {} (v1.350.00);
\path[->, color=gray] (v1.10.00) edge node[rotate=0.00, anchor=south] {} (v0.170.00);
\path[->, color=gray] (v0) edge[loop above] node {} ();
\path[->, line width=0.8pt] (v1) edge[loop above] node {} ();
\end{tikzpicture}\quad
\begin{tikzpicture}[auto, initial text=, >=latex, accepting text=,
  accepting/.style=accepting by arrow, every state/.style={minimum
    size=1.3em}]
\useasboundingbox (-1.65, -0.65) rectangle (1.65, 1.5);
\node[state] (v0) at (1.200000, 0.000000) {$0$};
\node[state] (v1) at (-1.200000, 0.000000) {$1$};
\path[->, line width=0.8pt] (v0.190.00) edge node[rotate=360.00, anchor=north] {} (v1.350.00);
\path[->, line width=0.8pt] (v1.10.00) edge node[rotate=0.00, anchor=south] {} (v0.170.00);
\path[->, color=gray] (v0) edge[loop above] node {} ();
\path[->, color=gray] (v1) edge[loop above] node {} ();
\end{tikzpicture}\quad
\begin{tikzpicture}[auto, initial text=, >=latex, accepting text=,
  accepting/.style=accepting by arrow, every state/.style={minimum
    size=1.3em}]
\useasboundingbox (-1.65, -0.65) rectangle (1.65, 1.5);
\node[state] (v0) at (1.200000, 0.000000) {$0$};
\node[state] (v1) at (-1.200000, 0.000000) {$1$};
\path[->, color=gray] (v0.190.00) edge node[rotate=360.00, anchor=north] {} (v1.350.00);
\path[->, line width=0.8pt] (v1.10.00) edge node[rotate=0.00, anchor=south] {} (v0.170.00);
\path[->, line width=0.8pt] (v0) edge[loop above] node {} ();
\path[->, color=gray] (v1) edge[loop above] node {} ();
\end{tikzpicture}
\caption{$\mathcal D_{1}$}
\end{subfigure}\\
\begin{subfigure}[b]{\textwidth}\centering
\newcommand{\Bold}[1]{\mathbf{#1}}\begin{tikzpicture}[auto, initial text=, >=latex, accepting text=, accepting/.style=accepting by arrow, every state/.style={minimum size=1.3em}]
\node[state] (v0) at (1.200000, 0.000000) {$0$};
\node[state] (v1) at (-1.200000, 0.000000) {$1$};
\path[->, color=gray] (v0.190.00) edge node[rotate=360.00, anchor=north] {} (v1.350.00);
\path[->, color=gray] (v1.10.00) edge node[rotate=0.00, anchor=south] {} (v0.170.00);
\path[->, line width=0.8pt] (v0) edge[loop above] node {} ();
\path[->, line width=0.8pt] (v1) edge[loop above] node {} ();
\end{tikzpicture}
\caption{$\mathcal D_{2}$}
\end{subfigure}
\caption{Functional digraphs of the transducers of
  Examples~\ref{example:blocks} and \ref{example:00-11}.}
\label{fig:func-digraph-2}
\end{figure}
The next two examples investigate the asymptotic independence of length two
blocks of $0$-$1$-sequences.
\begin{example}[$10$- and $11$-blocks]\label{example:blocks}
  The two transducers in Figure~\ref{fig:11-indep-2} count the number of $10$- and
  $11$-blocks in $0$-$1$-sequences. After deleting the outputs, both transducers are the
  same. Thus, any non-degenerate probability distribution on the outgoing edges of the states
  gives a Markov chain with two output functions $k_{10}$ (for the $10$-blocks) and
  $k_{11}$ (for the $11$-blocks). 

Because of the two loops and the cycle $0\to 1\to0$,
Theorem~\ref{theorem:sing-matrix} implies that the number of $10$- and
$11$-blocks is asymptotically normally distributed.

The next question is: For which choices of probability distributions is the number
of $10$- and $11$-blocks asymptotically independent?
All functional digraphs with one or two components are given in
Figure~\ref{fig:func-digraph-2}. Using Theorem~\ref{thm:comb-2}, we obtain the
following system of equations for the values of the probabilities such that the numbers
of $11$-blocks and $10$-blocks are asymptotically independent:
first by definition
\begin{align*}
    1&=p_{0\to 0}+p_{0\to 1},\\
    1&=p_{1\to 0}+p_{1\to1},
  \end{align*}
then by \eqref{eq:expected-value-indep}
  \begin{align*}
    e_{10}&=\frac{p_{0\to 1}p_{1\to 0}}{p_{0\to1}p_{1\to 1}+2p_{0\to
        1}p_{1\to0}+p_{0\to0}p_{1\to 0}},\\
    e_{11}&=\frac{p_{0\to1}p_{1\to 1}}{p_{0\to1}p_{1\to 1}+2p_{0\to
        1}p_{1\to0}+p_{0\to0}p_{1\to 0}},
  \end{align*}
and finally for the independence
  \begin{align*}
p_{0\to1}&p_{1\to1}(-e_{10})(1-e_{11})+p_{0\to1}p_{1\to0}(1-2e_{10})(-2e_{11})+p_{0\to
      0}p_{1\to0}(-e_{10})(-e_{11})\\
    &=p_{0\to0}p_{1\to1}(-e_{10})(-e_{11})+
    p_{0\to0}p_{1\to1}(-e_{10})(1-e_{11}).
\end{align*}
This system has non-trivial real solutions, i.e. solutions where all probabilities
are non-zero, with
\begin{equation*}
 p_{0\to0}= -\frac12 p_{1\to1}+2 - \frac12\sqrt{p_{1\to1}^2 - 8p_{1\to1} + 8}
\end{equation*}
for all $0<p_{1\to1}<1$. Then we have $2-\sqrt{2}<p_{0\to 0}<1$.

Thus, for these transition probabilities, the number of $10$-blocks
and the number of $11$-blocks are asymptotically independent.

One such example of a non-trivial solution is $p_{1\to1}=p_{1\to 0}=0.5$,
$p_{0\to0}\approx 0.7192$ and $p_{0\to1}\approx 0.2808$. Note that for the
symmetric distributions $p_{0\to0}=p_{0\to 1}=p_{1\to1}=p_{1\to 0}=0.5$, we
obtain asymptotic dependence of the number of $10$- and $11$-blocks.
\end{example}

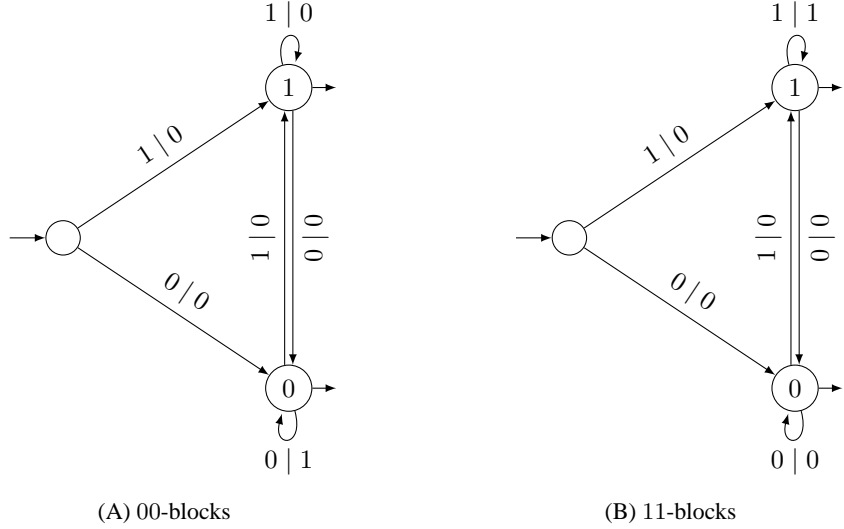
\begin{figure}
  \centering
  \begin{subfigure}[b]{0.45\textwidth}\centering
    \begin{tikzpicture}[auto, initial text=, >=latex, accepting text=,
      accepting/.style=accepting by arrow, every state/.style={minimum
        size=1.3em}]
      \node[state] (v0) at (3.000000, -2.000000) {$0$}; 
      \node[state] (v1) at (3.000000, 2.000000) {$1$}; 
      \node[state, initial] (v2) at (0.000000, 0.000000) {}; 
      \path[->] (v0.0.00) edge node[rotate=0.00, anchor=south] {} ++(0.00:2ex); 
      \path[->] (v1.0.00) edge node[rotate=0.00, anchor=south] {} ++(0.00:2ex); 
      \path[->] (v0.100.00) edge node[rotate=90.00, anchor=south] {$1\mid 0$} (v1.260.00); 
      \path[->] (v0) edge[loop below] node {$0\mid 1$} (); 
      \path[->] (v2) edge node[rotate=33.69, anchor=south] {$1\mid 0$} (v1); 
      \path[->] (v1) edge[loop above] node {$1\mid 0$} (); 
      \path[->] (v2) edge node[rotate=-33.69, anchor=south] {$0\mid 0$} (v0); 
      \path[->] (v1.-80.00) edge node[rotate=90.00, anchor=north] {$0\mid 0$} (v0.80.00);
    \end{tikzpicture}
    \caption{$00$-blocks}
  \end{subfigure}
  \begin{subfigure}[b]{0.45\textwidth}\centering
    \begin{tikzpicture}[auto, initial text=, >=latex, accepting text=,
      accepting/.style=accepting by arrow, every state/.style={minimum
        size=1.3em}]
      \node[state] (v0) at (3.000000, -2.000000) {$0$}; 
      \node[state] (v1) at (3.000000, 2.000000) {$1$}; 
      \node[state, initial] (v2) at (0.000000, 0.000000) {}; 
      \path[->] (v0.0.00) edge node[rotate=0.00, anchor=south] {} ++(0.00:2ex); 
      \path[->] (v1.0.00) edge node[rotate=0.00, anchor=south] {} ++(0.00:2ex); 
      \path[->] (v0.100.00) edge node[rotate=90.00, anchor=south] {$1\mid 0$} (v1.260.00); 
      \path[->] (v0) edge[loop below] node {$0\mid 0$} (); 
      \path[->] (v2) edge node[rotate=33.69, anchor=south] {$1\mid 0$} (v1); 
      \path[->] (v1) edge[loop above] node {$1\mid 1$} (); 
      \path[->] (v2) edge node[rotate=-33.69, anchor=south] {$0\mid 0$} (v0); 
      \path[->] (v1.-80.00) edge node[rotate=90.00, anchor=north] {$0\mid 0$} (v0.80.00);
    \end{tikzpicture}
    \caption{$11$-blocks}
  \end{subfigure}
  \caption{Transducers to compute the number of $00$- and $11$-blocks.}
  \label{fig:00-11}
\end{figure}
\begin{example}[$00$- and $11$-blocks]\label{example:00-11}
  The two transducers in Figure~\ref{fig:00-11} count the number of $00$- and
  $11$-blocks in $0$-$1$-sequences. They have the same underlying graph and the same input
  labels. Thus, choosing any non-degenerate probability distribution of the
  outgoing edges of the states yields a Markov chain with two
  output functions. 

  Because of the two loops and the cycle $0\to 1\to0$,
Theorem~\ref{theorem:sing-matrix} implies that the number of $00$- and
$11$-blocks is asymptotically normally distributed.

The next question is: For which choices of probability distributions is the number
of $00$- and $11$-blocks asymptotically independent? 
  The functional digraphs of the final component are the same as in
  Example~\ref{example:blocks}, see again Figure~\ref{fig:func-digraph-2}.
  By Theorem~\ref{thm:comb-2}, the system of equations for the transition probabilities
  $p_{e}$ such that the two output functions are asymptotically independent
  are:
  first by definition
  \begin{align*}
      1&=p_{0\to 0}+p_{0\to 1},\\
      1&=p_{1\to 0}+p_{1\to1},
    \end{align*}
    then by \eqref{eq:expected-value-indep}
    \begin{align*}
e_{00}&=\frac{p_{0\to 0}p_{1\to 0}}{p_{0\to1}p_{1\to 1}+2p_{0\to
          1}p_{1\to0}+p_{0\to0}p_{1\to 0}},\\
      e_{11}&=\frac{p_{0\to 1}p_{1\to 1}}{p_{0\to1}p_{1\to 1}+2p_{0\to
          1}p_{1\to0}+p_{0\to0}p_{1\to 0}},
    \end{align*}
and finally for the independence
    \begin{align*}
p_{0\to1}&p_{1\to1}(-e_{00})(1-e_{11})+p_{0\to1}p_{1\to0}(-2e_{00})(-2e_{11})+p_{0\to
        0}p_{1\to0}(1-e_{00})(-e_{11})\\
      &=p_{0\to0}p_{1\to1}(1-e_{00})(1-e_{11})+
      p_{0\to0}p_{1\to1}(-e_{00})(-e_{11}).
  \end{align*}
  These equations have no solution with $0<p_{e}<1$ for all transitions
  $e$. Thus, the numbers of $00$- and $11$-blocks are asymptotically
  dependent for all choices of the input distributions, as expected.
\end{example}

\section{Proofs}\label{sec:proofs}
In this section, we prove the results from
Section~\ref{sec:results}. Most of the
proofs follow along the same ideas as in
\cite{Heuberger-Kropf-Wagner:2014:combin-charac}.
The main differences are that one
has to replace ``complete transducer'' by ``Markov chain'' and the input sum by the
output sum $K_{n}^{(1)}$.

We first prove Theorem~\ref{thm:comb-2} with the help of two lemmas. For one
of these lemmas, we use a version
of the Matrix-Tree Theorem for weighted directed forests proved in
\cite{Chaiken:1982:matrixtree,Moon:1994:matrixtree}. At the
end of this section, we prove Theorems~\ref{theorem:bounded-variance} and~\ref{theorem:sing-matrix}.

\begin{definition}Let $A$, $B\subseteq\{1,\ldots, N\}$. Let $\mathcal F_{A,B}$ be
  the set of all forests
  which are spanning subgraphs of the final component of the Markov chain $\M$ with $|A|$ trees such that
every tree is rooted at some vertex $a\in A$ and contains exactly one
vertex $b\in B$.

Let $A=\{i_{1},\ldots, i_{n}\}$ and $B=\{j_{1},\ldots,
  j_{n}\}$ with $i_{1}<\cdots<i_{n}$ and $j_{1}<\cdots<j_{n}$. For $F\in\mathcal F_{A,B}$, we define a function $g\colon B\to A$ by $g(j)=i$ if $j$ is in the tree of
$F$ which is rooted in vertex $i$. We further define the function
$h\colon A\to B$ by $h(i_{k})=j_{k}$ for $k=1,\ldots,n$. The composition
$g\circ h\colon A\to A$ is a permutation of $A$. We define $\sign F=\sign
g\circ h$.
\end{definition}

If $|A|\neq|B|$, then $\mathcal F_{A,B}=\emptyset$. If $|A|=|B|=1$, then
$\sign F=1$ and $\mathcal F_{A,B}$ consists of all spanning trees
rooted in $a\in A$.

\begin{theorem*}[All-Minors-Matrix-Tree Theorem~\cite{Chaiken:1982:matrixtree,Moon:1994:matrixtree}]
For  a directed, weighted
graph with loops and multiple edges, let $L=(l_{ij})_{1\leq i,j\leq N}$ be the Laplacian matrix, that is $\sum_{j=1}^{N}l_{ij}=0$ for every $i=1,\ldots,N$ and
 $-l_{ij}$ is the sum of the weights $p_{e}$ of all edges $e$ from $i$ to $j$ for $i\neq j$.
Then, for $|A|=|B|$, the minor $\det L_{A,B}$ satisfies
\begin{equation*}\det L_{A,B}=(-1)^{\sum_{i\in A}i+\sum_{j\in B}j}\sum_{F\in \mathcal
  F_{A,B}}p_{F}\sign F\end{equation*}
where $L_{A,B}$ is the matrix $L$ whose rows with index in $A$ and columns with
index in $B$ are deleted.
\end{theorem*}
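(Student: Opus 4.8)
The plan is to reduce the determinant to a sum over edge subsets via a factorization of the Laplacian and the Cauchy--Binet formula, and then to identify the surviving terms with the forests in $\mathcal F_{A,B}$. Assume without loss of generality that the graph has no loops (a loop contributes $p_{e}$ both to $l_{ii}$ and to the subtracted term, hence cancels and can never appear in a forest). Introduce the $N\times|\mathcal E|$ tail-incidence matrix $T$ with $T_{i,e}=1$ when $e$ starts at $i$ and $0$ otherwise, the head-incidence matrix $H$ defined analogously for the endpoint of $e$, and $D=\diag(p_{e})$. Checking the two index cases shows
\[
L=T\,D\,(T-H)^{\top},
\]
since $(TDT^{\top})_{ij}=\delta_{ij}\sum_{e\text{ from }i}p_{e}$ reproduces the diagonal and $(TDH^{\top})_{ij}=\sum_{e:i\to j}p_{e}$ reproduces the off-diagonal entries. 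Deleting the rows in $A$ and the columns in $B$ amounts to keeping the rows $\bar A:=\{1,\dots,N\}\setminus A$ of $T$ and the rows $\bar B:=\{1,\dots,N\}\setminus B$ of $T-H$, so that $L_{A,B}=T_{\bar A}\,D\,(T-H)_{\bar B}^{\top}$, a product of an $(N-n)\times|\mathcal E|$, a diagonal, and an $|\mathcal E|\times(N-n)$ matrix.

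Next I would apply Cauchy--Binet to expand the determinant as a sum over edge subsets $S\subseteq\mathcal E$ with $|S|=N-n$,
\[
\det L_{A,B}=\sum_{|S|=N-n}\det\big(T_{\bar A,S}\big)\Big(\prod_{e\in S}p_{e}\Big)\det\big((T-H)_{\bar B,S}\big),
\]
and determine which $S$ give a nonzero summand. Each column of $T$ has a single nonzero entry, at its tail, so $\det(T_{\bar A,S})\neq0$ forces $S$ to consist of exactly one out-edge from each vertex of $\bar A$ and no out-edge from $A$; thus $S$ is a functional digraph on $\bar A$ in which every vertex of $A$ has out-degree $0$. On the other side, the standard unimodularity of the signed incidence matrix shows that $\det((T-H)_{\bar B,S})\neq0$ forces $S$, viewed as an undirected graph, to be a forest each of whose components contains exactly one vertex of $B$. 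Intersecting the two conditions, following the unique out-edges inside a component without creating a cycle must terminate at an out-degree-$0$ vertex, necessarily a root in $A$; hence the surviving $S$ are exactly the spanning forests with $n$ trees, each rooted at a vertex of $A$ and containing exactly one vertex of $B$, that is, $S\in\mathcal F_{A,B}$, and each contributes $p_{S}=\prod_{e\in S}p_{e}$.

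The remaining and genuinely delicate step is to show that for every $F\in\mathcal F_{A,B}$ the product of the two $\pm1$ factors equals the claimed sign,
\[
\det\big(T_{\bar A,F}\big)\,\det\big((T-H)_{\bar B,F}\big)=(-1)^{\sum_{i\in A}i+\sum_{j\in B}j}\,\sign F.
\]
To handle this I would fix one ordering of the edges of $F$ and use it for both determinants. The first factor is then the sign of the permutation matching the edge order to the increasing order of the tails, a bijection $F\to\bar A$. For the second factor I would block-decompose by the trees of $F$: reordering rows ($\bar B$) and columns (edges) so that each tree occupies a consecutive block makes $(T-H)_{\bar B,F}$ block triangular, each block being the incidence matrix of a single tree with its unique $B$-vertex deleted, which is unimodular and can be evaluated by peeling leaves. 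Collecting the reordering signs, the per-tree factors, and the tail-to-root correspondence, the discrepancy between labelling the trees by their roots in $A$ and by their distinguished vertices in $B$ is precisely the permutation $g\circ h$ of the statement, producing the factor $\sign F$, while the positions of the deleted indices relative to the increasing orderings produce $(-1)^{\sum_{i\in A}i+\sum_{j\in B}j}$. The case $|A|=|B|=1$ is a useful check, since there $\sign F=1$ and the formula reduces to the classical weighted Matrix--Tree Theorem. The main obstacle throughout is exactly this sign bookkeeping; an alternative that sidesteps Cauchy--Binet is to induct on $N$ by Laplace-expanding $\det L_{A,B}$ along the row of a vertex that is a leaf of the relevant forests, but the sign tracking is comparable in difficulty.
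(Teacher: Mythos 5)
The paper does not prove this theorem at all: it is imported by citation from Chaiken and Moon, so there is no internal proof to compare against. Your route (factoring $L$ through tail/head incidence matrices and expanding by Cauchy--Binet) is a legitimate and standard strategy; it differs from Chaiken's original argument, which expands $\det L_{A,B}$ directly and cancels the non-forest terms by a sign-reversing involution on configurations containing a cycle. The parts you carry out are correct: the factorization $L=TD(T-H)^{\top}$ (and the observation that loops are invisible to $L$), the identification $L_{A,B}=T_{\bar A}D(T-H)_{\bar B}^{\top}$, the Cauchy--Binet expansion, and the characterization of the surviving edge sets --- out-degree $1$ on $\bar A$ and $0$ on $A$ from the first factor, forest with one $B$-vertex per component from the second, hence exactly the elements of $\mathcal F_{A,B}$. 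One small simplification you could exploit: since every edge of $F$ has both endpoints in the same tree, a simultaneous reordering of rows and columns by trees makes $(T-H)_{\bar B,F}$ block \emph{diagonal}, not merely block triangular.

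The genuine gap is that the sign identity
$\det(T_{\bar A,F})\det((T-H)_{\bar B,F})=(-1)^{\sum_{i\in A}i+\sum_{j\in B}j}\sign F$
is stated as a target and accompanied by a plan, but never established. This is not a peripheral detail: for $|A|=|B|=1$ the right-hand side is the classical unsigned statement, so the factor $\sign F$ (defined via the permutation $g\circ h$ matching roots in $A$ to distinguished vertices in $B$) together with the global sign $(-1)^{\sum_{i\in A}i+\sum_{j\in B}j}$ is precisely the content that the all-minors version adds. You would need to actually track three contributions and show their product is as claimed: the permutation sign from matching the edge order to the tails in $\bar A$; the per-tree determinants $\pm1$ of the reduced incidence matrices together with the row/column reordering signs on the $(T-H)$ side; and the parity coming from the positions of the deleted indices $A$ and $B$ inside $\{1,\dots,N\}$. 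Until that computation is done (for instance by induction on the number of edges, contracting a leaf edge and checking how each of the three factors changes), the proof establishes the formula only up to a sign depending on $F$, which is strictly weaker than the theorem.
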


The All-Minors-Matrix-Tree Theorem is still valid for $|A|\neq |B|$ if we assume
that the determinant of a non-square matrix is $0$. For notational simplicity,
we use this convention in the rest of this section.

\begin{definition}
  The transition matrix $W(x_{1},\ldots,x_{m})$ of a Markov chain
  with $M$ states and $m$ output
  functions $k_{1}$, \dots, $k_{m}$ is a $M\times M$
  matrix whose $(i,j)$-th entry is 
  \begin{equation*}
    \sum_{e\colon i\rightarrow j}p_{e}x_{1}^{k_{1}(e)}\cdots x_{m}^{k_{m}(e)}
  \end{equation*}
  where $p_{e}$ is the probability of the transition $e$. 

  Let $A(x_{1},\ldots,x_{m})$ be the $N\times N$ transition matrix of the final component of the
  Markov chain. Let the order of the states be such that the
  transition matrix of the whole Markov chain $W(x_{1},\ldots,x_{m})$ has the block structure
  \begin{equation}
    \label{eq:block-structure}
    W(x_{1},\ldots,x_{m})=
    \begin{pmatrix}
      *&*\\
      0&A(x_{1},\ldots,x_{m})
    \end{pmatrix}
  \end{equation}
  where $*$ denotes any matrix. If the Markov chain is strongly connected, the
  matrices $*$ are not present (they have $0$ rows).
\end{definition}

We first use the All-Minors-Matrix-Tree Theorem to connect the derivatives of the characteristic
polynomial of the transition matrix with a sum of weighted digraphs in the
next lemma.
\begin{lemma}\label{lem:comb-connection}
  For $f(x_{1},x_{2},z)=\det(I-zA(x_{1},x_{2}))$, we have
\begin{gather*}\begin{aligned}
    f_{x_{i}}(1,1,1) &=-k_{i}(\mathcal D_{1}), & \!\qquad f_{x_{1}x_{2}}(1,1,1)
    &=(k_{1},k_{2})(\mathcal D_{2})-(k_{1},k_{2})(\mathcal D_{1}), \\
    f_{z}(1,1,1) &=-\mathds1(\mathcal D_{1}), & \!\qquad
    f_{x_{i}z}(1,1,1) &=(k_{i},\mathds1)(\mathcal D_{2})-(k_{i},\mathds1)(\mathcal
    D_{1}),    \end{aligned}\\
    \begin{aligned}
      f_{x_{i}x_{i}}(1,1,1)+f_{x_{i}}(1,1,1)&=(k_{i},k_{i})(\mathcal D_{2})-(k_{i},k_{i})(\mathcal
      D_{1}),\\
      f_{zz}(1,1,1)+f_{z}(1,1,1)&=(\mathds1,\mathds1)(\mathcal D_{2})-(\mathds1,\mathds1)(\mathcal D_{1})\\
    \end{aligned}\end{gather*}
for $i=1$, $2$.
\end{lemma}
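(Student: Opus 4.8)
The plan is to compute each derivative of $f(x_1,x_2,z)=\det(I-zA(x_1,x_2))$ at the point $(1,1,1)$ by expanding the determinant via the All-Minors-Matrix-Tree Theorem, and then to recognize the resulting sums over spanning forests as the quantities $g(\mathcal D_1)$, $(g,h)(\mathcal D_1)$ and $(g,h)(\mathcal D_2)$ from Definition~\ref{def:sum-over-graphs-2}. The crucial observation is that differentiating $\det(I-zA)$ in $z$ removes a factor, turning an $N\times N$ determinant into a sum of principal cofactors of $I-A$; at $x_1=x_2=z=1$ the matrix $I-A(1,1,1)$ is (minus) the Laplacian of the final component, since $A(1,1,1)$ is the stochastic transition matrix and its row sums are $1$. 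Thus the Matrix-Tree machinery applies directly to the evaluated cofactors.

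First I would treat the lower-order derivatives $f_z$ and $f_{x_i}$. Writing $A=A(x_1,x_2)$ and using Jacobi's formula $\frac{\partial}{\partial t}\det(I-zA)=-\det(I-zA)\,\tr\bigl((I-zA)^{-1}\frac{\partial(zA)}{\partial t}\bigr)$ is one route, but cleaner is to expand $\det(I-zA)=\sum_\sigma \operatorname{sgn}(\sigma)\prod_i (I-zA)_{i\sigma(i)}$ and differentiate term by term. Setting $z=1$, $x_1=x_2=1$ afterwards, the derivative $f_z(1,1,1)$ becomes $-\sum_{i}\det(L_{\{i\},\{i\}})$ where $L=I-A(1,1,1)$ is the Laplacian; by the All-Minors-Matrix-Tree Theorem each such principal minor counts weighted spanning trees rooted at $i$, and summing over all roots $i$ and attaching the unique cycle closing the tree yields $\mathds1(\mathcal D_1)$, giving $f_z(1,1,1)=-\mathds1(\mathcal D_1)$. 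For $f_{x_i}$ one differentiates instead the entries $x_1^{k_1(e)}x_2^{k_2(e)}$ of $A$ with respect to $x_i$, which brings down a factor $k_i(e)$; organizing the result as a sum over a distinguished edge $e$ on the cycle of a functional digraph with one component produces exactly $-k_i(\mathcal D_1)$, because summing $k_i(e)$ over the edges of each cycle $C$ reconstructs $k_i(C)$.

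The main work is the second-order derivatives. For $f_{x_1x_2}$, $f_{x_iz}$ and the combinations $f_{x_ix_i}+f_{x_i}$ and $f_{zz}+f_z$, differentiating twice distributes the two derivatives either onto two distinct matrix entries or onto the same entry. When the two derivatives hit distinct edges, the structure is a spanning subgraph with \emph{two} marked edges lying on cycles, i.e.\ a contribution to the functional digraphs $\mathcal D_2$ with two components (one cycle per component); when they hit the same edge or the same component, the contribution lands in $\mathcal D_1$. I expect the correction terms $+f_{x_i}$ and $+f_z$ in the last two identities to arise precisely from the diagonal (same-edge) terms in $x_i^{k_i}$: since $\frac{\partial^2}{\partial x^2}x^{k}\big|_{x=1}=k(k-1)=k^2-k$, the $-k$ part is what the added first derivative cancels, leaving the clean $k^2$-type weight $(k_i,k_i)$. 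The hard part will be bookkeeping the signs and the combinatorial identification: I must check that the sign $(-1)^{\sum_{i\in A}i+\sum_{j\in B}j}\operatorname{sgn} F$ from the theorem, combined with the signs from the determinant expansion and from closing trees into cycles, collapses to give the stated signs (a single minus for the $\mathcal D_1$ terms in $f_{x_i}$ and $f_z$, and the $\mathcal D_2$-minus-$\mathcal D_1$ pattern for the mixed second derivatives). Once the forests-with-one-or-two-extra-edges are correctly matched to $\mathcal D_1$ and $\mathcal D_2$ and the $\sign F$ contributions verified to cancel against the permutation signs, each of the six identities follows by direct comparison with Definition~\ref{def:sum-over-graphs-2}.
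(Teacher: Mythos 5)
Your proposal follows essentially the same route as the paper, whose proof of this lemma consists of differentiating the determinant and applying the All-Minors-Matrix-Tree Theorem (exactly as in the cited Lemma~5.3 of Heuberger--Kropf--Wagner) to identify the resulting Laplacian minors with spanning forests that close up into the functional digraphs of $\mathcal D_{1}$ and $\mathcal D_{2}$. The one point you rightly flag as still to be checked --- that closing a two-tree forest by two marked out-edges can also yield a single cycle with one marked edge off the cycle, so that $\sign F$ must be verified to cancel such configurations against the one-tree terms --- is precisely the bookkeeping carried out in that reference, and your account of the correction terms $f_{x_{i}}$ and $f_{z}$ via $\partial^{2}x^{k}/\partial x^{2}\vert_{x=1}=k^{2}-k$ is the right explanation.
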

This lemma can be proven in the same way as
\cite[Lemma~5.3]{Heuberger-Kropf-Wagner:2014:combin-charac} using the
All-Minors-Matrix-Tree Theorem \cite{Chaiken:1982:matrixtree,Moon:1994:matrixtree}.

The following lemma will be used for $m\geq 2$ output functions
later on.
\begin{lemma}\label{lemma:gf}Let
  $f(x_{1},\ldots,x_{m},z)=\det(I-zA(x_{1},\ldots,x_{m}))$. Then there is a
   unique dominant  root $z=\rho(x_{1},\ldots,x_{m})$ of $f$ in a
  neighborhood of $(1,\ldots,1)$.

  The moment generating function of $(K_{n}^{(1)},\ldots,K_{n}^{(m)})$ has the
  asymptotic expansion
  \begin{equation*}
  \mathbb E(\exp(s_{1}K_{n}^{(1)}+\cdots+s_{m}K^{(m)}_{n}))=e^{u(s_{1},\ldots,s_{m})n+v(s_{1},\ldots,s_{m})}(1+\bigOh(\kappa^{n}))
\end{equation*}
where $\kappa<1$,
\begin{align*}
  u(s_{1},\ldots,s_{m})&=-\log\rho(e^{s_{1}},\ldots,e^{s_{m}}),
\end{align*}
and $v(s_{1},\ldots, s_{m})$ are analytic functions in a small neighborhood of $(0,\ldots,0)$.
\end{lemma}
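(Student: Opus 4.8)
The plan is to recognize the moment generating function as a bilinear form in a power of the transition matrix and then extract its dominant asymptotics through Perron--Frobenius theory together with analytic perturbation of a simple eigenvalue. First I would rewrite the generating function combinatorially. Since $K_n^{(j)}=\sum_{e\in X_n}k_j(e)$ along the random path, setting $x_j=e^{s_j}$ gives
\[
\mathbb{E}\bigl(\exp(s_1K_n^{(1)}+\cdots+s_mK_n^{(m)})\bigr)=\sum_{P}p_P\prod_{e\in P}\prod_{j=1}^m x_j^{k_j(e)},
\]
where the sum runs over paths $P$ of length $n$ starting in state $1$ and $p_P=\prod_{e\in P}p_e$ is the path probability. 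This equals $\boldsymbol{u}^{\top}W(x_1,\ldots,x_m)^n\bfones$, where $\boldsymbol{u}$ selects the initial state and $\bfones$ sums over the terminal states; by Remarks~\ref{remark:final-outputs} and~\ref{remark:exit-weights}, final outputs and exit weights only alter the two boundary vectors and therefore do not affect the main terms.

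Next I would analyze the spectrum of $W(x_1,\ldots,x_m)$ at the point $(1,\ldots,1)$. The block structure~\eqref{eq:block-structure} shows that the eigenvalues of $W(1,\ldots,1)$ consist of those of the transient block together with those of $A(1,\ldots,1)$. Because every transient state eventually reaches the final component, the transient block is substochastic with spectral radius strictly below $1$; because $A(1,\ldots,1)$ is the stochastic matrix of a strongly connected, aperiodic component, Perron--Frobenius yields that $1$ is a \emph{simple} eigenvalue and the only eigenvalue of modulus $1$. Hence $W(1,\ldots,1)$ has a simple, strictly dominant eigenvalue equal to $1$, contributed by the final component.

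I would then invoke analytic perturbation theory. The entries of $A$ are polynomials in $x_1,\ldots,x_m$, hence analytic in $(s_1,\ldots,s_m)$ via $x_j=e^{s_j}$. Since the dominant eigenvalue at $(1,\ldots,1)$ is simple and isolated, it persists as an analytic simple eigenvalue $\lambda(x_1,\ldots,x_m)$ with an analytic rank-one spectral projector $P(x_1,\ldots,x_m)$ in a neighborhood, and by continuity of the spectrum it stays strictly dominant there. Writing $W^n=\lambda^nP+R_n$ with $R_n$ controlled by the subdominant part gives
\[
\mathbb{E}\bigl(\exp(s_1K_n^{(1)}+\cdots+s_mK_n^{(m)})\bigr)=\lambda(x_1,\ldots,x_m)^n\,c(x_1,\ldots,x_m)\bigl(1+\bigOh(\kappa^n)\bigr),
\]
where $c=\boldsymbol{u}^{\top}P\bfones$ is analytic and nonzero near $(1,\ldots,1)$ and $\kappa<1$. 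Setting $u=\log\lambda(e^{s_1},\ldots,e^{s_m})$ and $v=\log c(e^{s_1},\ldots,e^{s_m})$ produces the stated expansion. Finally, the roots of $f=\det(I-zA)=\prod_i(1-z\lambda_i)$ in $z$ are the reciprocals $1/\lambda_i$ of the eigenvalues of $A$, so the dominant root is $\rho=1/\lambda$, analytic with $\rho(1,\ldots,1)=1$, giving $u=-\log\rho$ and proving both assertions.

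The main obstacle I anticipate is making the spectral gap uniform: I must ensure that, after shrinking to a single neighborhood of $(1,\ldots,1)$, the eigenvalue $\lambda$ stays strictly larger in modulus than every other eigenvalue of the \emph{full} matrix $W$, including those of the transient block, so that one constant $\kappa<1$ bounds the relative error for all $n$ at once. This follows from continuity of the eigenvalues and a compactness argument, but needs care because the transient contribution must also be dominated. A secondary point is verifying $c\neq0$ near $(1,\ldots,1)$, so that $v=\log c$ is well defined and analytic; this is handled by nonnegativity of the projector's boundary contractions and reachability of the final component from the initial state.
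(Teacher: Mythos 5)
Your argument is correct, but it takes a different formal route from the paper's. The paper writes the moment generating function as a coefficient extraction $[z^{n}]F_{1}/(F_{2}f)$ from a rational function, uses the block structure \eqref{eq:block-structure} to factor the denominator into the final-component part $f=\det(I-zA)$ and a transient part $F_{2}$, locates the dominant pole at $z=\rho(x_{1},\ldots,x_{m})$ by Perron--Frobenius applied to the final component, shows that $F_{2}$ only contributes singularities with $\lvert z\rvert>1$ by comparing the substochastic transient block $\M_{0}$ with a stochastic completion $\M_{0}^{+}$, checks that the numerator does not vanish at the dominant pole, and then applies singularity analysis. You instead expand $W^{n}$ directly as $\lambda^{n}P+R_{n}$ via analytic perturbation of the simple, strictly dominant Perron eigenvalue, and recover the statement about $f$ at the end through $f=\prod_{i}(1-z\lambda_{i})$. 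The two approaches rest on exactly the same Perron--Frobenius input (including your observation that the transient block has spectral radius below $1$, which is the same comparison the paper makes). What the paper's route buys is that the root $\rho$ of $f$ is literally the dominant pole of the generating function, so the first assertion of the lemma requires no extra step, and the transient part is disposed of as a separate factor; what your route buys is that the analyticity of $u$ and $v$ falls out immediately from perturbation theory of a simple isolated eigenvalue, without coefficient extraction. The two caveats you flag are real but unproblematic: the uniform spectral gap follows from upper semicontinuity of the spectrum on a compact neighborhood, and $c\neq0$ holds because at $(1,\ldots,1)$ the spectral projector is $\bfones\,(0,\pi)$ with $\pi$ the stationary distribution of the final component (the right Perron eigenvector of $W(1,\ldots,1)$ is the all-ones vector since every state reaches the final component), so $c=1$ there and remains nonzero nearby by continuity.
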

\begin{proof}
  The moment generating function of $(K_{n}^{(1)},\ldots, K_{n}^{(m)})$ is 
  \begin{equation*}
    \mathbb E(\exp(s_{1}K_{n}^{(1)}+\cdots+s_{m}K_{n}^{(m)}))=[z^{n}]v_{1}^{t}(I-zW(e^{s_{1}},\ldots,e^{s_{m}}))^{-1}v_{2}(e^{s_{1}},\ldots,e^{s_{m}})
  \end{equation*}
  for the initial vector $v_{1}$, and a
  vector $v_{2}(x_{1},\ldots,x_{m})$ encoding all the final information of the states\footnote{This
    information is the final output (see
    Remark~\ref{remark:final-outputs}) and the exit weight (see
    Remark~\ref{remark:exit-weights}) included as $w_{i}x_{1}^{f_{1}(i)}\cdots
    x_{m}^{f_{m}(i)}$ in  the $i$-th coordinate of $v_{2}(x_{1},\ldots,x_{m})$. This does not change the asymptotic behavior (see Remark~\ref{remark:final-output-not-important}).} where we write
  $[z^{n}]b(z)$ for the coefficient of $z^{n}$ in the power series $b$. Because of the
  block structure of the transition matrix $W$ of the whole Markov chain in~\eqref{eq:block-structure}, we
  obtain
  \begin{align*}
    \mathbb
    E(x_{1}^{K_{n}^{(1)}}\cdots x_{m}^{K^{(m)}_{n}})&=[z^{n}]\frac{F_{1}(x_{1},\ldots,x_{m},z)}{\det(I-zW(x_{1},\ldots,x_{m}))}\\
    &=[z^{n}]\frac{F_{1}(x_{1},\ldots,x_{m},z)}{F_{2}(x_{1},\ldots,x_{m},z)f(x_{1},\ldots,
      x_{m},z)}
  \end{align*}
  for ``polynomials'' $F_{1}$ and $F_{2}$ , i.e.\ finite linear combinations of
  $x_{1}^{\alpha_{1}}\cdots x_{m}^{\alpha_{m}}z^{\beta}$ for $\alpha_{i}\in\mathbb R$ and
  $\beta$ a non-negative integer. The function $F_{2}$ corresponds to the
  determinant of the non-final part of the Markov chain.

We obtain the coefficient of $z^{n}$ by singularity analysis  (cf.\ \cite{Flajolet-Sedgewick:ta:analy}):
Since the final component of $\M$ is again a Markov chain, the dominant
singularity of $1/f(1,\ldots, 1,z)$  is $1$ by the theorem of Perron--Frobenius (cf.~\cite{Godsil-Royle:2001:alggraphtheory}). By the aperiodicity of the final
component, this dominant singularity is unique and it is $\rho(1,\ldots,1)=1$. 

Next, we consider the non-final components of the Markov chain using the same arguments as in \cite{Heuberger-Kropf-Wagner:2014:combin-charac}. The
corresponding non-final component $\M_{0}$ is not a Markov chain as the
transition matrix is not stochastic. Let $\M_{0}^{+}$ be the
Markov chain that is obtained from $\M_{0}$ by adding loops with the missing probabilities where
necessary. The
dominant eigenvalue of the transition matrix of $\M_{0}^{+}$ is $1$. As the transition matrices of $\M_{0}$ and $\M_{0}^{+}$ satisfy element-wise inequalities
but are not equal (at $(x_{1},\ldots,x_{m})=(1,\ldots,1)$), the theorem of Perron--Frobenius
(cf.~\cite[Theorem~8.8.1]{Godsil-Royle:2001:alggraphtheory}) implies that the
dominant eigenvalues of $\M_{0}$ have absolute value less than $1$.
Thus, the dominant singularities of $F_{2}(1,\ldots,1,z)^{-1}$ are at $\lvert
z\rvert>1$.

As $A(1,\ldots,1,z)=(1-z)^{-1}$, we obtain $F_{1}(1,\ldots,1)\neq 0$.

Thus, there is a is the
unique, dominant singularity of
\begin{equation*}
  \frac{F_{1}(1,\ldots,1,z)}{F_{2}(1,\ldots,1,z)f(1,\ldots,1,z)},
\end{equation*}
which is $\rho(1,\ldots,1)=1$.
This
also holds for $(x_{1},\ldots,x_{m})$ in a small neighborhood of $(1,\ldots,1)$ by the continuity of
the eigenvalues of the transition matrices. Thus, $\rho(x_{1},\ldots, x_{m})$ is this unique
dominant singularity.

Now, singularity analysis (cf.~\cite{Flajolet-Sedgewick:ta:analy}) implies
the statement of this lemma.
\end{proof}

\begin{remark}\label{remark:final-output-not-important}
  The main term of the asymptotic expansion of the moment generating
  function only depends on $\rho(x_{1},\ldots,x_{m})$ and therefore on
  $f(x_{1},\ldots,x_{m},z)$. It does not depend on the
  ``polynomials'' $F_{1}(x_{1},\ldots,x_{m},z)$ and
  $F_{2}(x_{1},\ldots,x_{m},z)$. Thus, only the final component influences the main
  term. Neither the states in the non-final part of the Markov chain nor the
  final outputs and exit weights
  influence the main term.
\end{remark}

Now, we can use the previous two lemmas to prove
Theorem~\ref{thm:comb-2}.

\begin{proof}[of Theorem~\ref{thm:comb-2}.]
By Lemma~\ref{lemma:gf} for two output functions $k_{1}$ and $k_{2}$, the moment generating function satisfies the conditions of
the Quasi-Power Theorem~\cite[Theorem~5.1]{Heuberger-Kropf-Wagner:2014:combin-charac}, which yields the expected value
\begin{equation*}
  \mathbb E (K_{n}^{(1)}, K^{(2)}_{n})=n\grad u(\bfzero)+\bigOh(1)
\end{equation*}
and the variance
\begin{equation*}
  \mathbb V(K^{(1)}_{n},K^{(2)}_{n})=nH_{u}(\bfzero)+\bigOh(1)
\end{equation*}
with $\grad u(\bfzero)$ and $H_{u}(\bfzero)$ the gradient and the Hessian of
$u$ at $\bfzero$, respectively. Furthermore, we obtain an asymptotic joint normal
distribution of the standardized random vector if the Hessian is not
singular by
\cite[Theorem~3.9]{Heuberger-Kropf-Wagner:2014:combin-charac}. Otherwise, the
limiting random vector is either a pair of degenerate random variables, or a
degenerate and normally distributed one, or a linear transformation thereof. Thus, the random variables $K^{(1)}_{n}$ and $K^{(2)}_{n}$ are asymptotically
independent if and only if the covariance is zero.

By implicit differentiation, we obtain the following formulas for the constants
of the moments in terms of the partial derivatives of $f$:
\begin{align*}
e_{i}&=\frac{f_{x_{i}}}{f_z}\Big\vert_{\bfones},\\
v_{i}&=\frac1{f_{z}^{3}}(f_{x_{i}}^{2}(f_{zz}+f_{z})+f_{z}^{2}(f_{x_{i}x_{i}}+f_{x_{i}})-2f_{x_{i}}f_{z}f_{x_{i}z})\Big\vert_{\bfones},\\
c&=
\frac1{f_{z}^{3}}(f_{x_{1}}f_{x_{2}}(f_{zz}+f_{z})+f_{z}^{2}f_{x_{1}x_{2}}-f_{x_{2}}f_{z}f_{x_{1}z}-f_{x_{1}}f_{z}f_{x_{2}z})\Big\vert_{\bfones}
\end{align*}
for $i=1$, $2$.

Now, Lemma~\ref{lem:comb-connection} implies the results as stated in the
theorem. 
\end{proof}

\begin{proof}[of Theorem~\ref{theorem:bounded-variance}]
This follows by the same arguments as in \cite[Theorem~3.1]{Heuberger-Kropf-Wagner:2014:combin-charac}.
\end{proof}

\begin{proof}[of Corollary~\ref{cor:01output-2}]
This follows by the same arguments as in \cite[Corollary
3.6]{Heuberger-Kropf-Wagner:2014:combin-charac}.
\end{proof}

\begin{proof}[of Theorem~\ref{theorem:sing-matrix}]
  WLOG, we assume that $\Expect K_{n}^{(i)}=\bigOh(1)$ for
  $i=1,\ldots,m$ by subtracting the corresponding constant of the expected value from each
  output function. There exists a unitary matrix $T=(t_{ji})_{1\leq j,i\leq m}$ such that the
  variance-covariance matrix $\Sigma$ can be diagonalized as $T\Sigma
  T^{\top}=D$. The diagonal matrix $D$ is the variance-covariance matrix of
  the linearly transformed random vector $\boldsymbol{Y}_{n}=T\boldsymbol{K}_{n}$.

Then $\Sigma$ is singular if and only if the diagonal matrix
  $D$ is singular. This is equivalent to 
  \begin{equation}\label{eq:var-bounded-2}
    \mathbb V(t_{j1}K_{n}^{(1)}+\cdots+ t_{jm}K_{n}^{(m)})=\bigOh(1)
  \end{equation}
  holds for a $j\in\{1,\ldots,m\}$. Now consider the output function
  $t_{j1}k_{1}+\cdots+t_{jm}k_{m}$. By Theorem~\ref{theorem:bounded-variance},
  \eqref{eq:var-bounded-2}
  is equivalent to
  \begin{equation*}
    t_{j1}k_{1}(C)+\cdots+t_{jm}k_{m}(C)=0
  \end{equation*}
  holding for all cycles of the final component (since the expected value of this
  output function is $\bigOh(1)$).

If we shift back the output function such that the expected value
is no longer bounded, we obtain an additional summand $a_{0}\mathds 1(C)$.

The asymptotic joint normal distribution follows from Lemma~\ref{lemma:gf} and
the multidimensional Quasi-Power Theorem~\cite[Theorem~2.22]{Drmota:2009:random}.
\end{proof}
\bibliographystyle{amsplain}
% use the following instead if you encounter problems 
%\bibliographystyle{alpha}
\bibliography{cheub}

\end{document}